\documentclass[a4paper]{amsart}

\usepackage{hyperref}
\usepackage{color}
\usepackage{graphicx}

\DeclareMathOperator{\Jac}{Jac}
\DeclareMathOperator{\Sym}{Sym}
\DeclareMathOperator{\ord}{ord}
\DeclareMathOperator{\Res}{Res}

\newtheorem{thm}{Theorem}

\theoremstyle{definition}
\newtheorem{example}{Example}
\newtheorem{lem}{Lemma}

\theoremstyle{definition}
\newtheorem{defn}{Definition}

\begin{document}

\title{Algebraic Solution of Jacobi Inverse Problem and Explicit Addition Laws on Jacobians}
\author{Yaacov Kopeliovich}
\address{Department of Finance, School of Business, University of Connecticut, Storrs}
\email{yaacov.kopeliovich@uconn.edu}
\date{\today}

\begin{abstract}
We formulate a solution to the Algebraic version of the Inverse Jacobi problem. Using this solution we produce explicit addition laws on any algebraic curve generalizing the law suggested by Leykin \cite{BL} in the case of $(n,s)$ curves. This gives a positive answer to a question asked by T.\,Shaska whether addition laws appearing in \cite{BL} can be produced in a coordinate free manner. 
\end{abstract}
\maketitle
\section{Introduction}
Since 1980's Elliptic Curve Cryptography grown into a vast field. In the heart of these cryptography applications is the fact that elliptic curves form an Abelian group. That is if $\mathcal{E}$ is an elliptic curve and $(x_1,y_1)$ and $(x_2,y_2)$ are 2 points on the curve, there is an explicit addition law that gives us the third point on $\mathcal{E}$. In fact, a more general statement holds and for any Abelian group one can devise a cryptographic system similar the systems produced by $\mathcal{E}$. 
This fact led to a search for other examples of Abelian group. One such example is the Jacobian $\Jac(X)$ of any curve $X$. While there are security challenges designing cryptographic systems for curves of high genus there is still a natural question whether explicit addition laws can be formulated for $\Jac(X).$ 
As far as we know there is no easy formulation for such laws. For $g=2$ explicit addition laws were found by Gaudry in \cite{Ga} and for general curves one has the algorithm attributed to Florian Hess \cite{H} and Makdisi \cite{Ma}. 
But these algorithms are not  as straightforward as the algorithms in $g=1,2.$ The one exception is subclass of curves given by an equation: $y^n=x^s+p(x,y)$ where $\deg_yp(x,y)<n$ and $\deg_xp(x,y)<s.$ For these curves the authors of \cite{BL} formulated explicit addition laws and used them to generalize the theory of $\sigma$ and $\wp$ functions from $g=1.$ See \cite{BeLe} for an explicit solution of the Jacobi inverse  problem for $(n,s)$ curves. 
During the Nato conference\footnote{NATO Science for Peace and Security, Advanced Research Workshops Isogeny based post-quantum cryptography Hebrew University of Jerusalem, July 29-31, 2024} 
Tony Shaska raised the question whether these explicit laws can be formulated in coordinate free and equation free way. A possible application for this to cryptography is to build cryptographic systems (for example in genus two) 
given by intersection of algebraic varieties.  
Another possible application is to look for explicit isogenies which require explicit addition laws. 
Our goal in this small note is to answer Shaska's question positively (at least for non-special divisors). We will adept \cite{BL} to  solve the  algebraic Jacobi inverse problem and use it to produce explicit addition laws which in our opinion are simpler that the laws produced by Hess and Makdisi. We work above $\mathbb{C}$ though the construction can be probably performed over any field. The structure of this note is as follows: in section 1 we will formulate and solve the alegbraic Jacobi inverse problem. In section 2 we apply the results from section 1 to obtain addition laws. 

\section{Algebraic Jacobi Inverse Problem and its solution}
To formulate explicit laws we first solve the inverse Jacobi problem. Let 
$W_g=\Sym(X^g)$ be a symmetric product of $X$ $g$ times. We have: 
\begin{thm}
Let $\Jac(X)$ be the Jacobian of $X$, then the natural map: $W_g \to \Jac(X)$ is onto. 
\end{thm}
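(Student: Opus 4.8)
The plan is to realize the map in the statement as the Abel--Jacobi map and to promote a generic local-isomorphism statement to global surjectivity by a dimension and properness argument. First I would fix a base point $P_0\in X$ and a basis $\omega_1,\dots,\omega_g$ of holomorphic differentials, and write the map $u\colon W_g\to\Jac(X)$ explicitly as
\[
u(P_1+\cdots+P_g)=\Bigl(\sum_{i=1}^g\int_{P_0}^{P_i}\omega_1,\ \dots,\ \sum_{i=1}^g\int_{P_0}^{P_i}\omega_g\Bigr)\bmod\Lambda,
\]
where $\Lambda\subset\mathbb{C}^g$ is the period lattice, so that $\Jac(X)=\mathbb{C}^g/\Lambda$. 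This is a holomorphic map (indeed a morphism of projective varieties), and since $\dim W_g=\dim\Jac(X)=g$ the whole problem reduces to showing that the image has dimension $g$; surjectivity will then follow.

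Next I would compute the differential of $u$ at a reduced divisor $D=P_1+\cdots+P_g$ with distinct points. Choosing a local coordinate $z_i$ near each $P_i$ and writing $\omega_j=f_{ji}(z_i)\,dz_i$, the Jacobian of $u$ is the $g\times g$ matrix $\bigl(f_{ji}(P_i)\bigr)_{i,j}$. This matrix is invertible exactly when the points $P_1,\dots,P_g$ impose independent linear conditions on $H^0(X,\Omega^1)$, which by Riemann--Roch is equivalent to $h^0(D)=1$, i.e.\ to $D$ being non-special. A generic effective divisor of degree $g$ is non-special, so $u$ is a local biholomorphism on a dense open subset of $W_g$; in particular its image contains a nonempty open subset of $\Jac(X)$ and hence has dimension $g$.

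To finish I would invoke properness: $W_g$ is compact (a symmetric product of the projective curve $X$), so $u(W_g)$ is a closed analytic subvariety of $\Jac(X)$ by the proper mapping theorem, and it is irreducible as the image of the irreducible $W_g$. A closed irreducible analytic subvariety of dimension $g$ inside the $g$-dimensional irreducible complex torus $\Jac(X)$ must be the whole torus, giving $u(W_g)=\Jac(X)$.

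The main obstacle is the differential computation together with the identification of its rank with the independence of the conditions imposed by $D$; once this Brill--Noether/Riemann--Roch input is in place, the passage from generic maximal rank to global surjectivity is forced by compactness and the irreducibility of the torus. An alternative, more constructive route---closer in spirit to the algebraic inversion the paper is after---would be to exhibit, for a given $e\in\Jac(X)$, the $g$ zeros of a suitable translate of the Riemann theta function $\theta\bigl(u(P)-e-\kappa\bigr)$ and to use Abel's theorem to check that their sum maps to $e$; this produces the preimage explicitly rather than merely proving existence.
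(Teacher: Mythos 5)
Your proof is correct, but it takes a genuinely different route from the paper's --- in fact it is essentially the classical analytic argument (Abel--Jacobi map, rank of the Brill--Noether matrix at a non-special divisor, properness plus a dimension count) that the paper attributes to Farkas--Kra and explicitly sets out to replace by an algebraic one. The paper instead argues constructively: it represents a class by a Mumford divisor $\sum_{i=1}^{g+m}P_i-(g+m)w$ based at a Weierstrass point $w$, takes the first $g+m$ functions $f_1,\dots,f_{g+m}$ with poles only at $w$, and forms the Vandermonde-type determinant with rows $(1,f_1(P_i),\dots,f_{g+m}(P_i))$; this is a single meromorphic function with a pole of order $2g+m$ at $w$ vanishing at the $P_i$, so its divisor supplies exactly $g$ further zeros $Q_1,\dots,Q_g$ and the linear equivalence $\sum_i P_i+\sum_j Q_j\sim(2g+m)w$, after which a second determinant of the same type inverts $Q_1+\cdots+Q_g$, exhibiting an explicit effective degree-$g$ divisor in the given class. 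The trade-offs are clear: your argument yields full surjectivity with no hypotheses (genericity of non-special divisors suffices, since you only need maximal rank on a dense open set) and rests on standard inputs ($D$ imposes independent conditions on $H^0(X,\Omega^1)$ iff $i(D)=0$ iff $h^0(D)=1$, plus the proper mapping theorem), but it is nonconstructive --- it proves a preimage exists without producing one --- and is tied to the analytic model $\mathbb{C}^g/\Lambda$. The paper's determinantal argument is restricted to non-special divisors (a limitation the author concedes in the introduction) yet produces the solution divisor explicitly, which is the whole point of the note: the addition laws of the later sections are read off from these determinants, and the construction is plausibly portable to fields other than $\mathbb{C}$. Your closing suggestion of locating the zeros of a translated theta function $\theta(u(P)-e-\kappa)$ is indeed the constructive counterpart of your argument, but it still relies on transcendental data, whereas the paper needs only the functions with prescribed pole orders at the Weierstrass point $w$.
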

The proof of this theorem in \cite{FK} is produced through the analytical model of $\Jac(X)$. Mimicking Leykin's construction from \cite{BL}, we show that this theorem can be shown algebraically. Let $w$ be a Weierstrass point on $X$. 
\begin{defn} 
A divisor of degree $0$ is called Mumford divisor if its of the form: $\sum_{i=1}^{g+m}P_i-(g+m)w.$ It follows that it is enough to formulate the addition law for Mumford divisors (as every other divisor of degree $0$ can be presented as a difference of Mumford divisors).
\footnote{This is a direct generalization of Mumford's definition for Hyper-elliptic curves in \cite{Mu}.} 
\end{defn} 
Consider Mumford divisors of the form: $P_1 + \cdots + P_{g+m}-(g+m)w.$ We show: 
\begin{thm}\label{main}
Assume that $D_{g+m}=P_1+ \cdots + P_{g+m}-(g+m)w$ is a non-special divisor in $\Jac(X).$ 
Then there a divisor $D_g=Q_1+ \cdots +Q_g-gw$ such that $D_{g+m}-D_g\equiv 0$ inside $\Jac(X)$.
\end{thm}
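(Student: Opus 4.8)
The plan is to reformulate the desired relation $D_{g+m}-D_g\equiv 0$ as a statement about effective divisors and then exhibit it by producing a rational function whose divisor realizes it. Since $D_{g+m}-D_g=\sum_{i=1}^{g+m}P_i-\sum_{j=1}^{g}Q_j-mw$, the relation $\equiv 0$ in $\Jac(X)$ means precisely that this degree-zero divisor is principal; equivalently, it suffices to find an effective divisor $\sum_{j=1}^{g}Q_j$ of degree $g$ that is linearly equivalent to $\sum_{i=1}^{g+m}P_i-mw$. As the latter divisor has degree $g$, Riemann--Roch gives $\ell\bigl(\sum_i P_i-mw\bigr)\ge g-g+1=1$, so a nonzero function $f$ in this space always exists; its divisor satisfies $\operatorname{div}(f)+\sum_i P_i-mw=\sum_j Q_j\ge 0$, which is the desired $D_g$. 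This already yields existence abstractly; the non-special hypothesis is what forces the reduced $\sum_j Q_j$ to be the unique effective representative of its class (it makes $\ell=1$). We may assume $m\ge 1$, the case $m=0$ being trivial.

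To obtain the algebraic, coordinate-free construction in the spirit of \cite{BL}, I would replace the abstract $f$ by an explicit determinant built from functions with poles only at $w$. Let $0=m_0<m_1<\cdots$ be the Weierstrass non-gap sequence at $w$ and choose $\varphi_k$ with a pole of order exactly $m_k$ at $w$ and regular elsewhere, so that $\{\varphi_0,\dots,\varphi_k\}$ is a basis of $L(m_k w)$. Form the $(g+m+1)\times(g+m+1)$ determinant
\[
F(P)=\det\begin{pmatrix}\varphi_0(P_1)&\cdots&\varphi_{g+m}(P_1)\\ \vdots& &\vdots\\ \varphi_0(P_{g+m})&\cdots&\varphi_{g+m}(P_{g+m})\\ \varphi_0(P)&\cdots&\varphi_{g+m}(P)\end{pmatrix},
\]
regarded as a function of $P$. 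By construction $F$ vanishes at each $P_i$, and expanding along the last row shows $F\in L(m_{g+m}w)$ with pole order exactly $m_{g+m}$ provided the cofactor of $\varphi_{g+m}(P)$ — the minor $\det(\varphi_k(P_i))_{0\le k\le g+m-1}$ — is nonzero. Since $w$ is a Weierstrass point one has $m_{g+m}=2g+m$, so $F$ has $2g+m$ zeros: the $g+m$ points $P_i$ together with $g$ further points $R_1,\dots,R_g$. Reading off $\operatorname{div}(F)\equiv 0$ gives $\sum_i P_i+\sum_j R_j\equiv (2g+m)w$, i.e. $D_{g+m}\equiv -\bigl(\sum_j R_j - gw\bigr)$.

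The determinant thus computes $-D_{g+m}$ in reduced form, so a second application of the same device to the $g$ points $R_j$ (now inside $L(2gw)$, using $\varphi_0,\dots,\varphi_g$) produces $g$ points $Q_1,\dots,Q_g$ with $\sum_j R_j + \sum_j Q_j\equiv 2gw$. Hence $\sum_j Q_j - gw\equiv -\bigl(\sum_j R_j-gw\bigr)\equiv D_{g+m}$, which exhibits $D_g=Q_1+\cdots+Q_g-gw$ explicitly and with the correct sign; the symmetric functions of the $Q_j$ can then be read directly off the coefficients of the two determinants.

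I expect the main obstacle to be showing that $F$ is not identically zero and that its pole order at $w$ is exactly $m_{g+m}$, equivalently that the evaluation minor $\det(\varphi_k(P_i))_{0\le k\le g+m-1}$ does not vanish; this is exactly where the non-special hypothesis enters, as it guarantees that the $P_i$ are in general position so that the residual divisor has the expected degree $g$ and is the unique effective representative of its class. The remaining steps — verifying the semigroup identity $m_{g+m}=2g+m$, counting the residual zeros, and checking that non-specialness is preserved under the negation $D_{g+m}\mapsto -D_{g+m}$ so that the second determinant is likewise non-degenerate — are then routine bookkeeping.
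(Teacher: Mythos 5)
Your proposal is correct and follows essentially the same route as the paper: the same $(g+m+1)\times(g+m+1)$ determinant in the functions with poles only at $w$, vanishing at the $P_i$ with pole order $2g+m$ at $w$ to produce $g$ residual points representing $-D_{g+m}$, followed by a second $(g+1)\times(g+1)$ determinant inside $L(2gw)$ to negate again and obtain $D_g$. Your additions — the abstract Riemann--Roch existence argument and the explicit identification of the non-vanishing of the evaluation minor $\det(\varphi_k(P_i))$ as the point where non-specialness enters — are sound refinements of details the paper treats only in passing, not a different method.
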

\begin{proof}
We first show the following lemma:  
\begin{lem}
Assume that $D_{g+m}=P_1+ \cdots + P_{g+m}-(g+m)w$ is a non-special divisor in $\Jac(X)$. 
Then there a divisor $E_g=Q_1 + \cdots + Q_g-gw$ such that $D_{g+m}+E_g \equiv 0$ inside $\Jac(X)$.
\end{lem}
\begin{proof}
For a divisor $D_{g+m}=\sum_{i=1}^{g+m}P_i$ consider the first $g+m$ functions of the base with unique pole in $w,$ $f_{1}, \ldots, f_{g+m}.$ These exist because $w$ is a Weierstrass point.  
Per discussion in \cite{FK} the leading order of this pole is $g+m$ in $w.$ Define the analogue of Vandermonde determinant: 
\[
  A_{g+m+1\times g+m+1} =
  \left| {\begin{array}{ccccc}
    1 & f_{1}(P) & \cdots & f_{g+m}(P)\\
    1 & f_{1}(P_1) & \cdots & f_{g+m}(P_1)\\
    1 & f_{1}(P_2) & \cdots & f_{g+m}(P_2)\\
    \vdots & \vdots & \ddots & \vdots\\
    1 & f_{1}(P_{g+m}) & \cdots & f_{g+m}(P_{g+m})\\
  \end{array} } \right|
\]
$P$ is an arbitrary point on $X.$ 
Note that this determinant is a well defined function (at least when the divisor is non-special). Because of the determinant properties we have that $\det(A_{g+m+1\times g+m+1})(P_i)=0$ as we have two identical rows in the matrix. On the other hand, the leading order pole in this determinant is precisely $2g+m$ according to the discussion in \cite{FK}. 
Hence, we must have $g$ more zeros for $\det(A_{g+m+1\times g+m+1})=0$. 
Call these zeros $Q_1$, \ldots, $Q_g.$ By definition of the Jacobian we have: 
\begin{equation}
\sum_{i=1}^{g+m}P_i-(g+m)w+\sum_{j=1}^g Q_j - gw \equiv 0,
\end{equation}
as there is a function whose divisor is: $\sum_{i=1}^{g+m}P_i+\sum_{j=1}^gQ_j-(2g+m)w$.
\end{proof}
To complete the proof of the theorem we need to verify that  the divisor $\sum_{j=1}^g Q_i$ can be inverted. 
Repeat the determinant construction:
\[
  B_{g+1\times g+1} =
  \left| {\begin{array}{ccccc}
    1 & f_{1}(Q) & \cdots & f_{g}(Q)\\
    1 & f_{1}(Q_1) & \cdots & f_{g}(Q_1)\\
    1 & f_{1}(Q_2) & \cdots & f_{g}(Q_2)\\
    \vdots & \vdots & \ddots & \vdots\\
    1 & f_{1}(Q_{g}) & \cdots & f_{g}(Q_{g})\\
  \end{array} } \right|.
\]
Accordingly, this polynomial vanishes at the points $Q_1,$ \ldots, $Q_g$ and will have additional $g$ zeros as the function $f_{g}$ has a unique pole of order $2g$ at $w.$ Calling the divisor of the extra zeros: $Q_1'$ ,\ldots, $Q_g'$ we conclude the theorem. 
\end{proof}
\section{Applications}
\subsection{Explicit Addition Law}
For an algebraic curve $X$ be a Riemann surface. We like to formulate the addition law of divisors on 
$\Sym^g(X)$ directly. Note that the formulation of the law in \cite{FK} relies on the analytical construction of the Jacobian. The usual algebraic description of the Jacobian as the class group of $F(X)$ does not produce an explicit addition law. 
We use Theorem~\ref{main} to produce such a law. 
Let $D_g\,{=}\,\sum_{i=1}^gP_i$, $D'_g\,{=}\,\sum_{i=1}^gP_i'$ be two divisors of degree $g$. 
We form a degree $2g$ divisor $\sum_{i=1}^g (P_i+P'_i)$. By Theorem~\ref{main} we have a divisor $E_g=\sum_{i=1}^g Q_i$ such that $\sum_{i=1}^g (P_i+P'_i) -
\sum_{j=1}^gQ_j\equiv 0.$ By definition of $\Jac(X)$ we have: 
\begin{equation}
D_g+D_g'=E_g
\end{equation}
Note that this definition allows you to produce explicit divisors $E_g$ for any divisors of degree $D_k$,
$D_{k_1}$ of degree $k$, $k_1$ respectively such that $k+k_1\geq g.$
\subsection{Multiplication by $n$ and division polynomials}
In this subsection we formulate the multiplication by $n$ law. 
That is we are interested to adopt a similar definition to add divisors of the form : 
$nP_1$, \ldots, $nP_g.$ For this consider a divisor of degree $ng$ and 
select the number of appropriate function $f_1$, \ldots, $f_{gn}.$ Consider the expression: 
\[
  T_{ng+1\times ng+1} =
  \left| {\begin{array}{ccccc}
    1 & f_{1}(P) & \cdots & f_{ng}(P)\\
    1 & f_{1}(P_1) & \cdots & f_{ng}(P_1)\\
    1 & f_{1}(P_2) & \cdots & f_{ng}(P_2)\\
    \vdots & \vdots & \ddots & \vdots\\
    1 & f_{1}(P_g) & \cdots & f_{ng}(P_g)\\
    1 & f'_{1}(P_1) & \cdots & f'_{ng}(P_1)\\
    \vdots & \vdots & \ddots & \vdots\\
    1 & f'_{1}(P_g) & \cdots & f'_{ng}(P_g)\\
    1 & f''_1(P_1) &  \cdots & f''_{ng}(P_1)\\
    \vdots & \vdots & \ddots & \vdots\\
    1 & f^{n-1}_{1}(P_g) & \cdots & f^{n-1}_{ng}(P_1)\\
  \end{array} } \right|
\]
where $f_i^j$ is the $j$-th  derivative of $f_i$ with respect to a local coordinate on $X-w$.
Once again the function above has degree $(n+1)g$ and the order of each $P_i$ appearing in the divisor is precisely $n.$
\subsection{Division Polynomials}
The previous section enables us to formulate division polynomials like condition for torsion points of order $n$ on $\Jac(X).$ Recall the $P_1+ \cdots + P_g-gw $ is a torsion point iff $\sum_{i=1}^g nP_i-ngw\equiv 0$ in $\Jac(X)$. Rewrite this condition as $\sum_{j=1}^{g}(n-1)P_i-(n-1)gw+\sum_{i=1}^gP_i-gw.$ The immediate consequence of this is the following: 
\begin{lem}
Let $T_{(n-1)g+1\times (n-1)g+1}$ be the determinant formed above. Then $P_1+\cdots + P_g$
 is a torsion point of order $n$ if and only if the remaining $g$ zeros of $T_{(n-1)g+1\times (n-1)g+1}$ 
 are $P_1$, \ldots, $P_g$.
\end{lem}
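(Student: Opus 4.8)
The plan is to read off the full divisor of the function $T := T_{(n-1)g+1\times (n-1)g+1}$ on $X$, push it into the class group, and then use the non-speciality hypothesis to convert an equivalence of divisor classes into an honest equality of effective divisors.

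First I would compute $\operatorname{div}(T)$, viewing $T$ as a function of the free point $P$. Exactly as in the multiplication-by-$n$ construction, the rows attached to $P_1,\dots,P_g$ together with their derivatives up to order $n-2$ form a confluent (Hermite--Wronskian) pattern, so the determinant vanishes to order $n-1$ at each $P_i$. Its only pole is at $w$, and since $T$ is the multiplication-by-$(n-1)$ determinant its order there is $((n-1)+1)g = ng$ (the degree formula $(n+1)g$ of the displayed determinant with $n$ replaced by $n-1$). Counting the $ng$ zeros against the $ng$ poles, the $(n-1)g$ zeros forced at the $P_i$ leave exactly $g$ remaining zeros $R_1,\dots,R_g$, whence
\[
  \operatorname{div}(T) = (n-1)\sum_{i=1}^g P_i + \sum_{k=1}^g R_k - ng\,w.
\]

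Next, since this divisor is principal it is trivial in $\Jac(X)$, which after grouping $ng\,w = (n-1)gw + gw$ reads $(n-1)\bigl(\sum_i P_i - gw\bigr) \equiv -\bigl(\sum_k R_k - gw\bigr)$. I would then rewrite the torsion condition precisely as in the paragraph preceding the statement: $\sum_i P_i - gw$ is a torsion point of order $n$ iff $n\bigl(\sum_i P_i - gw\bigr)\equiv 0$, i.e. iff $(n-1)\bigl(\sum_i P_i - gw\bigr) + \bigl(\sum_i P_i - gw\bigr)\equiv 0$. Substituting the previous relation collapses this to $\bigl(\sum_i P_i - gw\bigr) \equiv \bigl(\sum_k R_k - gw\bigr)$, that is, $\sum_i P_i \equiv \sum_k R_k$ as divisor classes of degree $g$.

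Finally, the crucial step is to upgrade this class equivalence to a literal equality of divisors. Because $\sum_i P_i$ is non-special, Riemann--Roch gives $\ell\bigl(\sum_i P_i\bigr)=1$, so $\sum_i P_i$ is the unique effective divisor in its class; as $\sum_k R_k$ is effective of degree $g$ and lies in the same class, we must have $\sum_k R_k = \sum_i P_i$, i.e. the remaining zeros of $T$ are precisely $P_1,\dots,P_g$, and conversely. The main obstacle is exactly this last upgrade: the bookkeeping of $\operatorname{div}(T)$ and the algebra in $\Jac(X)$ are routine, but the "if and only if" genuinely relies on the non-speciality hypothesis (equivalently, on the injectivity of $W_g\to\Jac(X)$ on the non-special locus that complements the surjectivity of Theorem~1), without which linearly equivalent classes need not single out equal effective divisors.
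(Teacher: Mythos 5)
Your proof is correct and takes essentially the same route the paper intends: the paper states the lemma as an ``immediate consequence'' of the rewriting $nD=(n-1)D+D$ with $D=\sum_i P_i-gw$, which is exactly your computation of $\operatorname{div}(T)=(n-1)\sum_i P_i+\sum_k R_k-ngw$ followed by the substitution in $\Jac(X)$. Your only addition is to make explicit the non-speciality step ($\ell\bigl(\sum_i P_i\bigr)=1$, so the class contains a unique effective divisor) required for the ``only if'' direction---a hypothesis the paper assumes throughout but omits from the lemma's statement---which is a genuine and worthwhile clarification rather than a departure.
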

\section{$(n,s)$ curves}
Let us consider $(n,s)$ worked out by Leykin and his co-authors \cite{BL}.  
\begin{defn}
A curve is called $(n,s)$ if it satisfies the equation of the form: $y^n=x^s+p(x,y)$ where $\deg_x p(x,y)<s$ and $\deg_y p(x,y)<n.$
\end{defn}
Assuming $(n,s)=1$ $\infty$ is a Weierstrass point the genus is $g(X)=\frac{(n-1)(s-1)}{2}$
we have that $\ord_{\infty}(x)=n$ and $\ord_{\infty}(y)=s.$ The basis for Weierstrass functions are of the form: $\{x^iy^j| 0\leq i\leq s-1,0\leq j\leq n-1\}.$ Then if $P_i=(x_i,y_i)$ we have: 
\[
  A^{n,s}_{g+m+1\times g+m+1} =
  \left| {\begin{array}{ccccc}
    1 & x & \cdots & x^ly^b\\
    1 & x_1 & \cdots & x_1^ly_1^b\\
    1 & x_2 & \cdots & x_2^ly_2^b\\
    \vdots & \vdots & \ddots & \vdots\\
    1 & x_{g+m} & \cdots & x_{g+m}^ly_{g+m}^b\\
  \end{array} } \right|
\]
Where $l,b$ are powers that are determined so that $\ord_{\infty} x^ly^b=g+m$.
\begin{example}   
Consider for example $(3,s)$ curves given by equation: 
\begin{equation}
y^3=x^s+...
\end{equation}
then $\infty$ is a Weirstrass point. $\ord_{\infty}(x)=3$, $\ord_{\infty}(y)=4$. The base of the functions at this point are:  
$1,x,y,x^2,xy,y^2...$ the gaps are $1,2,5$ and after $6=2g$ you do not have any gaps. Assume that we have a divisor of degree $4:$
$P_1+P_2+P_3+P_4$. Let $P_i=(x_i,y_i).$ Then:
\[
  A^{3,4}_{5\times 5} =
  \left| {\begin{array}{ccccc}
    1 & x & y & x^2 & xy\\
    1 & x_1 & y_1 & x_1^2 & x_1y_1\\
    1 & x_2 & y_2 & x_2^2 & x_2y_2\\
    1 & x_3 & y_3 & x_3^2 & x_3y_3\\
    1 & x_4 & y_4 & x_4^2 & x_4y_4\\    
  \end{array} } \right|
\]
The order of this determinant is $7$ hence you have $3$ more zeros $Q_1$, $Q_2$, $Q_3.$ Now invert $Q_1+Q_2+Q_3$ through another matrix using the functions: $1,x,y,xy.$ This gives a divisor of the form $P_1'+P_2'+P_3'-3\infty$ 
which is equivalent to our \textcolor{red}{original divisor $\Jac(X)$.}
In this case we can say a little more about $Q_1+Q_2+Q_3$, the zeros at hand. 
Let $Q_i=(q_i,q_i')$, and  $f(x,y)=0$ be the equation of the curve. We have the following easy lemma: 
\begin{lem}
Let $p(x)=\prod_{i=1}^3(x-x_i)$ then, 
\begin{equation}
q(x)=\frac{\Res_y(A^{3,4}_{5\times 5},f(x,y))}{p(x)}
\end{equation}
\end{lem}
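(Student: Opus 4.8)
The plan is to identify $\Res_y(A^{3,4}_{5\times5},f)$, viewed as a polynomial in $x$, with the norm of the function $A^{3,4}_{5\times5}$ along the degree-$3$ covering $x\colon X\to\mathbb{P}^1$, and then read off its roots from the divisor of $A^{3,4}_{5\times5}$ already determined in Theorem~\ref{main}. First I would record two structural facts about $A:=A^{3,4}_{5\times5}$. Expanding the determinant along its first row and using that the basis functions $1,x,y,x^2,xy$ are at most linear in $y$, one finds $A=\alpha(x)+\beta(x)\,y$ with $\deg\alpha\le 2$ and $\deg\beta\le 1$; in particular $A$ has degree $1$ in $y$. Second, from the construction in Theorem~\ref{main}, $A$ is a rational function on $X$ whose only pole is at $\infty$, of order $2g+m=7$, and whose zero divisor is exactly $P_1+\cdots+P_4+Q_1+Q_2+Q_3$. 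This is where the non-special hypothesis enters: it guarantees that $A$ does not vanish identically and that these seven zeros (simple, for a generic divisor) are precisely the ones forced by the pole order.

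Next I would compute the resultant in branch form. Since $f=y^3-x^4-\cdots$ is monic of degree $3$ in $y$, with branches $\eta_1(x),\eta_2(x),\eta_3(x)$ over a generic $x$, the standard resultant formula gives
\[
\Res_y(A,f)=-\prod_{k=1}^{3}A\big(x,\eta_k(x)\big)=-\,N_{F(X)/\mathbb{C}(x)}(A),
\]
the norm of $A$ along $\pi\colon X\to\mathbb{P}^1_x$, the leading $y$-coefficient of $f$ being $1$. The divisor of this norm on $\mathbb{P}^1_x$ is the pushforward $\pi_*(\operatorname{div} A)$. Pushing forward $\operatorname{div}(A)=\sum_i P_i+\sum_j Q_j-7\,\infty$ and using that $\infty$ is the unique, totally ramified point over $x=\infty$, I obtain $\pi_*(\operatorname{div} A)=\sum_i [x_i]+\sum_j [q_j]-7[\infty]$. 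Hence $\Res_y(A,f)$ is a degree-$7$ polynomial in $x$ vanishing exactly at $x_1,\dots,x_4,q_1,q_2,q_3$, so
\[
\Res_y(A,f)=c\,\prod_{i=1}^{4}(x-x_i)\,\prod_{j=1}^{3}(x-q_j)
\]
for a nonzero constant $c$ fixed by the leading coefficients of $\alpha,\beta$. Dividing by $p(x)=\prod_i(x-x_i)$ leaves $c\,\prod_j(x-q_j)$, which is the asserted $q(x)$ up to the harmless scalar $c$, which may be normalized away.

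The hard part will be controlling the passage from the abstract norm on $X$ to the concrete resultant of the two \emph{plane} polynomials $A$ and $f$: a priori the affine intersection $\{A=0\}\cap\{f=0\}$ could acquire extra multiplicity or spurious points at singularities of the plane model or at infinity. The clean route is exactly the norm identity above, which is insensitive to singularities of the plane model because it is defined on the function field; I would therefore justify $\Res_y(A,f)=-N(A)$ directly from $f$ being monic in $y$, and verify that no branch $\eta_k$ contributes a pole over the affine $x$-line, so that $\prod_k A(x,\eta_k)$ is genuinely a polynomial of the predicted degree $7$. Once the degree and the seven roots are pinned down, the factorization and the division by $p(x)$ are immediate. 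I note in passing that for the degree-$4$ divisor of the example the product defining $p$ should run over all four indices $i=1,\dots,4$, so that the quotient $q(x)$ has the correct degree $3$.
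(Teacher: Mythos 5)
The paper offers no proof of this lemma at all --- it is stated as an ``easy lemma'' and left to the reader --- so your argument cannot be compared line by line; rather, it supplies the missing justification, and it does so correctly. Your key move, identifying $\Res_y(A^{3,4}_{5\times 5},f)$ with (the negative of) the norm $N_{F(X)/\mathbb{C}(x)}(A)$ via the branch-product formula for a monic $f$, and then computing the divisor of the norm as the pushforward $\pi_*(\operatorname{div} A)=\sum_i[x_i]+\sum_j[q_j]-7[\infty]$, is exactly the right mechanism: it converts the zero divisor of the determinant (established in the proof of Theorem~\ref{main}: zeros at $P_1,\dots,P_4,Q_1,Q_2,Q_3$, pole of order $2g+m=7$ at the totally ramified point $\infty$) into the factorization $\Res_y(A,f)=c\prod_{i=1}^{4}(x-x_i)\prod_{j=1}^{3}(x-q_j)$. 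The implicit reasoning behind the paper's statement is presumably the naive one --- the resultant vanishes at the $x$-coordinates of the common zeros of $A$ and $f$ --- and your norm/pushforward formulation is what makes that rigorous: it handles multiplicities, coincident $x$-coordinates among the $P_i$ and $Q_j$, and the behavior at infinity, all of which the naive intersection count glosses over. Two further points in your favor: the degree bookkeeping ($A=\alpha(x)+\beta(x)y$ with $\deg\alpha\le 2$, $\deg\beta\le 1$, so the resultant has degree $7$ via the $\beta^3x^4$ term) is consistent with the divisor-theoretic count; and you are right that the lemma as printed contains an error --- with $p(x)=\prod_{i=1}^{3}(x-x_i)$ the quotient would be a quartic still divisible by $(x-x_4)$, so the product must run over $i=1,\dots,4$, and the equality holds only up to the nonzero scalar $c$, which should be normalized (e.g.\ by taking $q(x)$ monic). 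One small loose end you could tighten: the case $\beta\equiv 0$ must be excluded, but this follows from the fact (used in Theorem~\ref{main} and guaranteed by non-specialness) that the pole order of $A$ at $\infty$ is exactly $7$, which forces the coefficient of $xy$, hence $\beta$, to be nonzero.
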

Furthermore as $y$ is linear in $A^{3,4}_{5\times 5}$ we can easily find $q_i'$ once we know $q_i.$
\end{example} 
 \section{The connection with trace function and Zemel coordinates} 
The connection of the preceeding construction to trace functions becomes clear if you like to consider the problem for divisors of the kind: 
$\sum_{i=1}^k \alpha_i P_i$. In this case the Vandermonde matrix collapses to a matrix 
where $f_{1w}(P_2)$, \ldots, $f_{g+mw}(P_2)$ collapses into something like: $f'_{1w}(P_1)$, \ldots, $f'_{g+mw}(P_1)$ etc. This produces another example of corollary 31 in \cite{Z}
\section{conclusion}
In this note we have formulated solution to the \textcolor{red}{algebraic Jacobi  problem} for non-special divisors on any Riemann surface $X.$ While our construction was performed over $\mathbb{C}$ it can probably be extended to any field. In this case Riemann surfaces will be replaced by algebraic curves. The example for $(3,4)$ curves serves as a basis for explicit addition algorithms on $(3,s)$ curves worked out in \textcolor{red}{[BeK]}. It is an interesting question whether such explicit addition laws can be worked in an abstract fashion.

\end{document}